\documentclass[oneside,reqno,12pt]{amsart} 

\usepackage{amsmath,amsthm,amscd}
\usepackage{amssymb,amsfonts} 
\usepackage{graphicx}
\usepackage{epstopdf}
\usepackage{fullpage}
\usepackage{caption}
\usepackage{subcaption}

\theoremstyle{definition}
\newtheorem{definition}{Definition}

\theoremstyle{plain}
\newtheorem{theorem}[definition]{Theorem}
\newtheorem{proposition}[definition]{Proposition}
\newtheorem{corollary}[definition]{Corollary}
\newtheorem{lemma}[definition]{Lemma}

\date{}
\begin{document}

\newcommand\Scal{{\mathcal S}}
\newcommand\F{{\mathcal F}}

\newcommand\R{{\mathbb R}}
\newcommand\C{{\mathbb C}}
\newcommand\Z{{\mathbb Z}}
\newcommand\N{{\mathbb N}}

\

\newcommand\dd{\partial}
\newcommand\ddp{d^\prime}
\newcommand\ddpp{d^{\prime\prime}}
\newcommand\lp{\ell^\prime}
\newcommand\lpp{\ell^{\prime\prime}}

\newcommand\Sh{{\widehat S}}

\newcommand\gge{\succeq}
\newcommand\ggr{\succ}

\newcommand\Span{\hbox{{\rm Span}}\;}
\newcommand\dimm{\hbox{{\rm dim}}\;}

\makeatletter
\newcommand{\LeftEqNo}{\let\veqno\@@leqno}
\makeatother

\title
[Bihomogeneous symmetric functions]
{Bihomogeneous symmetric functions}
\author{Yuly Billig}
\address{School of Mathematics and Statistics, Carleton University, Ottawa, Canada}
\email{billig@math.carleton.ca}

\

%\centerline{
%{\bf Yuly Billig}  
%\footnote{*}{Research supported by the  Natural Sciences and
%Engineering Research Council of Canada.}
%\footnote{}{AMS subject classification: }
%}

\

\

\begin{abstract} {We consider two natural gradings on the space of symmetric functions: by degree and by length. We introduce a differential operator $T$ that leaves the components of this double grading invariant and exhibit a basis of bihomogeneous symmetric functions in which this operator is triangular. This allows us to compute the eigenvalues of $T$, which turn out to be non-negative integers.}
\end{abstract}
\let\thefootnote\relax\footnote{2010 Mathematics Subject Classification: 05E05, 35P05.}

\maketitle
 
\

\

% {\bf Introduction.}

 Consider the following second order differential operator
\begin{displaymath}
 T = \frac{1}{2} \sum\limits_{ {\scriptstyle a+b = p+ q} \atop {\scriptstyle a,b,p,q \geq 1}}
x_a x_b \frac{\dd}{\dd x_p} \frac{\dd}{\dd x_q} 
\end{displaymath}
acting on the the Fock space $\F = \R [x_1, x_2, x_3, \ldots ]$. 

The Fock space $\F$ has two natural $\Z$-gradings, by degree, with deg$(x_k) = k$, and
by length, with len$(x_k) = 1$. 
Note that the two gradings are given by the eigenspaces of two operators, whose eigenvalues are degree and length respectively:
\begin{displaymath}
\sum\limits_{k\geq 1} k x_k  \frac{\dd}{\dd x_k} \quad\text{and}\quad
\sum\limits_{k\geq 1} x_k  \frac{\dd}{\dd x_k}.
\end{displaymath}

We can decompose $\F$ into a direct sum according to these gradings:
\begin{displaymath} 
\F = \mathop\oplus\limits_{d \geq \ell \geq 0} \F (d, \ell), 
\end{displaymath}
where $\F (d, \ell)$ is the span of all monomials of degree $d$ and length $\ell$. 

Since operator $T$ preserves both gradings, the subspaces $\F (d, \ell)$ are $T$-invariant. This work started with an observation that the eigenvalues of $T$ on $\F (d, \ell)$ appeared to be non-negative integers. For example, the spectrum of $T$ on $\F (12,4)$ is
\begin{displaymath} 
[1, 3, 3, 5, 6, 7, 7, 10, 10, 10, 13, 15, 17, 19, 30].
\end{displaymath}
 The goal of this note is to shed light on the pattern of the eigenvalues of $T$.

The space $\F$ may be viewed as the space of symmetric functions in variables
$t_1, t_2, t_3, \ldots$. Recall that power symmetric functions are
\begin{displaymath}
p_k = \sum_{i \geq 1} t_i^k, \quad k\geq 1.
\end{displaymath}

Then the algebra of symmetric functions is freely generated by $p_1, p_2, p_3, \ldots$
and may be identified with $\F$ via $x_k = \frac{p_k}{k}$. We refer to \cite{M} for the 
basic properties of symmetric functions that we review here.

Recall also the definitions of the elementary symmetric functions $e_k$ and complete 
symmetric functions $h_k$:
\begin{displaymath}
e_k = \sum\limits_{i_1 < i_2 < \ldots < i_k} t_{i_1} t_{i_2} \ldots t_{i_k}, 
\quad  
h_k = \sum\limits_{i_1 \leq i_2 \leq \ldots \leq i_k} t_{i_1} t_{i_2} \ldots t_{i_k}.
\end{displaymath}
Introducing the generating series 
\begin{displaymath}
e(z) = 1 + \sum_{k=1}^\infty e_k z^k, \quad h(z) = 1 + \sum_{k=1}^\infty h_k z^k,
\end{displaymath}
one can relate elementary and complete symmetric functions to power symmetric functions via
\begin{equation}
\label{hh}
h(z) = \exp \left( \sum_{k=1}^\infty \frac{p_k}{k} z^k \right)
= \exp \left( \sum_{k=1}^\infty x_k z^k \right),
\end{equation}
\begin{equation}
\label{ee}
e(z) = \exp \left( \sum_{k=1}^\infty (-1)^{k+1} \frac{p_k}{k} z^k \right)
= h(-z)^{-1}.
\end{equation}

 With respect to the two gradings on the Fock space, the power symmetric function $p_k$
has degree $k$ and length $1$. It follows from (\ref{hh}) and (\ref{ee}) that $h_k$ and $e_k$ have both degree $k$, but these functions are not homogeneous with respect to the grading given by length. Let us consider decompositions of elementary and complete symmetric functions into bihomogeneous components.
In order to do that, we introduce the generating series 
\begin{displaymath} 
h (r, z) = \exp \left( r \sum_{j=1}^\infty x_j z^j \right) = 1 + \sum_{d \geq \ell \geq 1}
g(d, \ell) r^\ell z^d .
\end{displaymath}
Then
\begin{displaymath}
h_k = \sum_{\ell \leq k} g(k, \ell), \quad 
e_k = \sum_{\ell \leq k} (-1)^{k+\ell} g(k, \ell).
\end{displaymath}
Note that $g(d, \ell) \in \F (d, \ell)$. We shall see below that $g(d, \ell)$ is an eigenfunction for $T$ which corresponds to the dominant eigenvalue on $\F (d, \ell)$.
Our plan is to calculate the spectrum of $T$ by constructing a bihomogeneous basis of the algebra of symmetric functions which consists of the products of functions $g(d, \ell)$.

\

We begin by showing that $T$ is diagonalizable.

\begin{proposition} 
\label{selfadj}
Operator $T$ is diagonalizable on $\F$ with real non-negative eigenvalues.
\end{proposition}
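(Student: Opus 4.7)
The plan is to exhibit $T$ as a positive, self-adjoint operator on $\F$ with respect to a natural inner product; since $T$ preserves the finite-dimensional subspaces $\F(d,\ell)$, the conclusion then follows from the real spectral theorem applied on each piece.

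I would begin by equipping $\F$ with the inner product that declares distinct monomials orthogonal and sets $\langle x^\alpha, x^\alpha \rangle = \prod_{k \geq 1} \alpha_k!$ on squared norms. A short matrix-element computation verifies that, under this pairing, multiplication by $x_k$ and differentiation $\dd/\dd x_k$ are mutually adjoint.

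Next, I would regroup the defining sum for $T$ by the common value $n = a+b = p+q$. Setting
\[
A_n = \sum_{\substack{p+q=n \\ p,q \geq 1}} \frac{\dd}{\dd x_p} \frac{\dd}{\dd x_q},
\]
the adjoint computation of the previous step gives $A_n^* = \sum_{a+b=n,\,a,b \geq 1} x_a x_b$, and the defining double sum rearranges as
\[
T = \frac{1}{2} \sum_{n \geq 2} A_n^* A_n.
\]
This expression makes both the self-adjointness of $T$ and the nonnegativity $\langle T f, f \rangle = \tfrac{1}{2} \sum_n \|A_n f\|^2 \geq 0$ manifest. Since each $\F(d,\ell)$ is finite-dimensional and $T$-invariant, the real spectral theorem produces an orthogonal eigenbasis of $\F(d,\ell)$ with nonnegative real eigenvalues, and the union over all $(d,\ell)$ diagonalizes $T$ on $\F$.

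There is no real obstacle here; the only thing to watch is that the rewriting as $\sum_n A_n^* A_n$ faithfully reproduces the original index set $\{a+b=p+q,\ a,b,p,q \geq 1\}$, which is clear because the constraints on $(a,b)$ and $(p,q)$ decouple once $n$ is fixed. The nontrivial content of the paper — integrality of the eigenvalues and the explicit pattern observed in the example on $\F(12,4)$ — is a separate matter that presumably requires the bihomogeneous basis advertised in the introduction.
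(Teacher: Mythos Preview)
Your proposal is correct and follows essentially the same route as the paper: the same inner product (the paper builds it as a tensor product of the one-variable pairing $\langle x^n,x^m\rangle=n!\,\delta_{n,m}$, which is exactly your $\prod_k\alpha_k!$), the same adjointness of $x_k$ and $\dd/\dd x_k$, and the same grouping by $n=a+b=p+q$ to exhibit $\langle Tf,f\rangle=\tfrac12\sum_n\|A_n f\|^2\ge0$. Your packaging of this as $T=\tfrac12\sum_n A_n^*A_n$ is a slightly slicker way to read off self-adjointness and positivity in one stroke, but the content is identical.
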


\begin{proof} 
Introduce a positive-definite scalar product on $\R [x]$ with $\left< x^n, x^m \right> = n! \delta_{n,m}$. It is easy to check that this scalar product satisfies
$$\left< \frac{d}{dx} f(x), g(x) \right> = \bigg< f(x), x g(x) \bigg> .$$
Viewing $\F$ as a tensor product of infinitely many copies of $\R[x]$,
$\F = \R [x_1] \otimes \R [x_2] \otimes \ldots$, we obtain a positive-definite scalar product on $\F$ for which $\frac{\dd}{\dd x_k}$ is adjoint to multiplication by $x_k$.
Then for $f, g \in \F$
\begin{displaymath} 
\left<  \sum\limits_{ a+b = p+ q} 
x_a x_b \frac{\dd}{\dd x_p} \frac{\dd}{\dd x_q} f , g \right> =
\left<  f, \sum\limits_{ a+b = p+ q} 
x_q x_p \frac{\dd}{\dd x_b} \frac{\dd}{\dd x_a} g \right> 
\end{displaymath}
and hence $T$ is a self-adjoint operator. Thus $T$ is diagonalizable on each invariant subspace $\F (d, \ell)$ with real eigenvalues.

Since
\begin{displaymath} 
\left< Tf, f \right> = \frac{1}{2} \sum_{n=2}^\infty 
\left< \sum_{p+q = n}  \frac{\dd}{\dd x_p} \frac{\dd}{\dd x_q} f  ,
\sum_{a+b = n}   \frac{\dd}{\dd x_b} \frac{\dd}{\dd x_a} f \right> \geq 0, 
\end{displaymath}
the eigenvalues of $T$ are non-negative.
\end{proof}

\

\begin{corollary} 
There is an orthonormal basis (with respect to the scalar product introduced in the proof of Proposition \ref{selfadj}) of $\F$, consisting of the eigenfunctions of $T$.
\end{corollary}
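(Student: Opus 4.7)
The corollary should follow almost immediately from the previous proposition by combining the finite-dimensional spectral theorem with the bigraded decomposition of $\F$. My plan is as follows.

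First I would observe that each bihomogeneous component $\F(d,\ell)$ is finite-dimensional: a monomial $x_{\lambda_1}\cdots x_{\lambda_\ell}$ lies in $\F(d,\ell)$ precisely when $(\lambda_1,\ldots,\lambda_\ell)$ is a partition of $d$ into $\ell$ parts, and there are only finitely many such partitions. By Proposition \ref{selfadj}, $T$ preserves $\F(d,\ell)$ and is self-adjoint there, so the finite-dimensional spectral theorem produces an orthonormal basis $B_{d,\ell}\subset\F(d,\ell)$ consisting of $T$-eigenvectors.

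Next I would verify that the pieces $\F(d,\ell)$ are mutually orthogonal with respect to the scalar product introduced in the proof of Proposition \ref{selfadj}. Since $\F=\R[x_1]\otimes\R[x_2]\otimes\cdots$ with the tensor product scalar product, the monomial basis $\{x_1^{a_1}x_2^{a_2}\cdots\}$ is orthogonal, with $\langle x_1^{a_1}x_2^{a_2}\cdots,\, x_1^{b_1}x_2^{b_2}\cdots\rangle = \prod_k a_k!\,\delta_{a_k,b_k}$. Two monomials living in different components $\F(d,\ell)\neq \F(d',\ell')$ necessarily have distinct multi-exponents, hence are orthogonal. Therefore $\F(d,\ell)\perp\F(d',\ell')$ whenever $(d,\ell)\neq(d',\ell')$.

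Finally I would assemble $B := \bigcup_{d\geq\ell\geq 0} B_{d,\ell}$. This is an orthonormal set of eigenvectors for $T$, and its linear span contains every bihomogeneous component, hence all of $\F=\bigoplus \F(d,\ell)$, so $B$ is the desired orthonormal eigenbasis. There is no real obstacle here; the only point one needs to be slightly careful about is that $\F$ is infinite-dimensional, so one cannot quote the spectral theorem on $\F$ directly, but the argument sidesteps this by working on each finite-dimensional $\F(d,\ell)$ separately and then taking an orthogonal direct sum.
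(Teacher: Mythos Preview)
Your argument is correct and is exactly the standard justification the paper has in mind; in fact the paper gives no proof at all for this corollary, treating it as immediate from Proposition~\ref{selfadj}. Your write-up simply makes explicit the routine details (finite-dimensionality of each $\F(d,\ell)$, mutual orthogonality of the bigraded pieces, and the finite-dimensional spectral theorem) that the author left implicit.
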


\

The dimension of $\F (d, \ell)$ is equal to the number of partitions of $d$ with exactly $\ell$ parts. Each such partition may be presented as a Young diagram $\Lambda$,
for example the following diagram
\begin{figure}[h]
% \captionsetup{labelformat=empty}
%\includegraphics[clip=true,trim=30pt 30pt 30pt 30pt]{D4.eps} 
\includegraphics{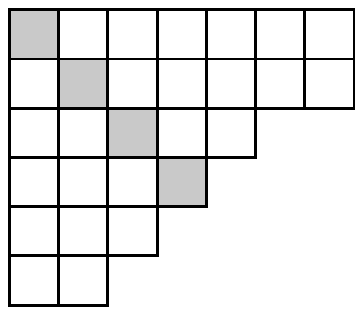} 
\caption{}
\label{yng}
\end{figure}
represents a partition $28 = 7 + 7 + 5 + 4 + 3 + 2$ with $d = 28$ and $\ell = 6$. Parameter $\ell$ is the number of rows in $\Lambda$, while $d$ is the total number of boxes in $\Lambda$. 

Let $k$ be the number of the diagonal boxes in $\Lambda$ (shaded boxes in Fig.~\ref{yng}). For each diagonal box, consider its {\it hook}, the boxes in its row to the right of the diagonal box, the boxes in its column below the diagonal box, together with the diagonal box itself. Also consider the {\it leg} of a diagonal box, the boxes in its column together with the diagonal box itself.
\begin{figure}
\centering
\begin{subfigure}{1.5in}
%\begin{subfigure}{3in}
%\includegraphics[clip=true,trim=40pt 0pt 30pt 0pt]{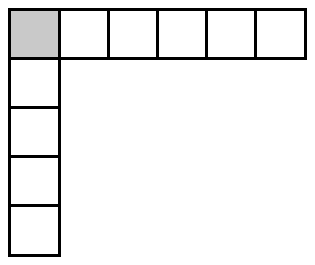} 
\includegraphics{hook.eps} 
\caption{Hook}
%\label{EX}
\end{subfigure}
\hbox{\hskip 0.9in}
\begin{subfigure}{1.5in}
\includegraphics{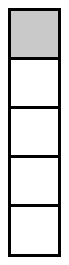} 
\caption{Leg}
\end{subfigure} 
\caption{}
\label{hookandleg}
\end{figure}

In Fig.~\ref{hookandleg} we show the hook and the leg corresponding to the second diagonal box in the above Young diagram $\Lambda$.

The {\it hook number} $d_i$ and the {\it leg number} $q_i$ of a diagonal box are the numbers of boxes in its hook and leg respectively. For the Young diagram $\Lambda$ above, the hook and the leg numbers $(d_i, q_i)$, $i =1, \ldots, k$, are $(12, 6), (10, 5), (5,3), (1,1)$.

Note that our definition of the leg number is not quite standard, usually the diagonal box is not included in its leg.

The hook and leg numbers satisfy $\sum\limits_{i=1}^k d_i = d$, \ $q_1 = \ell$, \ 
$d_i - q_i > d_{i+1} - q_{i+1}$ for $i < k$.

To each diagonal box we also assign its {\it leg increment} $\ell_i = q_i - q_{i+1}$, where $q_{k+1}$ is taken to be $0$. For the Young diagram in Fig.~1, $\ell_1 = 1$, 
$\ell_2 = 2$, $\ell_3 = 2$, $\ell_4 = 1$. Leg increments satisfy
\begin{equation}
\label{cond}
 \sum_{i=1}^k \ell_i = \ell, \ \ d_i > d_{i+1} + \ell_i \hbox{\rm \ \ for \ } i<k, \ \  d_k \geq \ell_k, \  \  \ell_1, \ldots, \ell_k \geq 1.
\end{equation}
Note that there is a bijective correspondence between Young diagrams with $\ell$ rows and sequences $(d_1, \ell_1), \ldots, (d_k, \ell_k)$ satisfying (\ref{cond}).

\

\begin{theorem} 
\label{basis}
The set $S(d,\ell)$ of polynomials
$ g(d_1, \ell_1) g(d_2, \ell_2) \ldots g(d_k, \ell_k)$ satisfying conditions
\begin{displaymath}
\sum_{i=1}^k d_i = d, \ \ \sum_{i=1}^k \ell_i = \ell,
\ \ d_i > d_{i+1} + \ell_i  \ {\it \ for \ }  i<k, \ \ d_k \geq \ell_k , 
\ \ \ell_1, \ldots, \ell_k \geq 1,
\end{displaymath}
forms a basis of $\F (d, \ell)$ for $d \geq \ell \geq 1$.
\end{theorem}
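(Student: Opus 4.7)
The plan is to prove the theorem in two steps: first, a dimension count, and second, a triangular linear-independence argument. By the bijection between partitions of $d$ with $\ell$ parts and hook-data sequences satisfying (\ref{cond}), noted just above the statement, $|S(d,\ell)|=\dim\F(d,\ell)$. Hence it suffices to prove linear independence of the $|S(d,\ell)|$ products in $\F(d,\ell)$.

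For linear independence I would work in the monomial basis $\{x_\mu := x_{\mu_1}\cdots x_{\mu_\ell} : \mu\vdash d,\ \ell(\mu)=\ell\}$ of $\F(d,\ell)$, equipped with the reverse-lexicographic (``min-first'') total order $\succ$: $\mu\succ\nu$ iff the parts of $\mu$ read smallest-first are lex-greater than those of $\nu$. The multinomial identity
\begin{displaymath}
g(d,\ell)=\sum_{\lambda\vdash d,\,\ell(\lambda)=\ell}\frac{1}{\prod_j m_j(\lambda)!}\,x_\lambda
\end{displaymath}
shows that a single factor has $\succ$-leading monomial $x_{\beta(d,\ell)}$, where $\beta(d,\ell)$ is the ``balanced'' partition with all parts in $\{\lfloor d/\ell\rfloor,\lceil d/\ell\rceil\}$. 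For a product $g_H := g(d_1,\ell_1)\cdots g(d_k,\ell_k)$ the natural candidate leading partition is the sorted concatenation $\mu^*(H) := \mathrm{sort}(\beta(d_1,\ell_1)\cup\cdots\cup\beta(d_k,\ell_k))$, with the strict inequality $d_i>d_{i+1}+\ell_i$ from (\ref{cond}) controlling how the blocks assemble.

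The main obstacle is that $H\mapsto\mu^*(H)$ need not be injective --- for example, at $(d,\ell)=(7,3)$ both $(7,3)$ and $(5,2)(2,1)$ yield $\mu^*=(3,2,2)$. I would resolve this by Gaussian elimination within each $\mu^*$-fiber: since every $[x_{\mu^*(H)}]\,g_H$ is a positive rational, subtracting appropriate multiples cancels $x_{\mu^*}$ and leaves residuals whose $\succ$-maximum is strictly smaller. Iterating downward in $\succ$ should triangulate the entire transition matrix from $\{g_H\}$ to $\{x_\mu\}$ with nonzero diagonal, giving linear independence. The most delicate point --- where the hook conditions (\ref{cond}) must enter essentially --- is to verify that each such residual stays in the span of products $g_{H'}$ whose $\mu^*(H')$ is strictly smaller (or within a previously-handled tie), so that the downward induction closes; combined with the count above, this yields the basis property.
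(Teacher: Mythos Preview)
Your counting step is fine, but the linear-independence argument has a genuine gap that I do not see how to close along the lines you sketch. You correctly identify that the leading-monomial map $H\mapsto\mu^*(H)$ is not injective, and you propose to repair this by Gaussian elimination within each $\mu^*$-fiber. The trouble is that this is circular. After cancelling the common leading monomial among the $g_H$ in a fiber, the residual is a specific linear combination of those $g_H$; its $\succ$-leading monomial is indeed strictly smaller, but you have no mechanism to express that residual in terms of products $g_{H'}$ with $\mu^*(H')\prec\mu^*$. Nothing in your setup says the residual lies in the span of such $g_{H'}$, and in fact that statement is equivalent in strength to what you are trying to prove. So the ``downward induction'' does not close: at each stage you are left with objects that are not single products $g_{H'}$ but uncontrolled linear combinations, and the hook conditions (\ref{cond}) never get used in any essential way.

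The paper sidesteps this entirely by proving the complementary half: it shows that $S(d,\ell)$ \emph{spans} $\F(d,\ell)$, and then invokes the same dimension count you use. Spanning is established via a straightening lemma (Lemma~\ref{twoprod}): any \emph{irregular} two-factor product $g(d',\ell')g(d'',\ell'')$ with $d''\le d'\le d''+\ell'$ is rewritten as a linear combination of \emph{regular} products $g(d_1,\ell_1)g(d_2,\ell_2)$ that are strictly larger in an explicit lexicographic order on pairs. This lemma is where the real work happens --- it is proved by extracting a coefficient from a carefully designed differential operator applied to $h(r,z)h(-r,z)$, and this is exactly where the inequalities in (\ref{cond}) enter. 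Once the lemma is in hand, one starts from the obvious spanning set of monomials $x_{p_1}\cdots x_{p_\ell}=g(p_1,1)\cdots g(p_\ell,1)$ and repeatedly straightens adjacent irregular pairs; the lexicographic order guarantees termination in $S(d,\ell)$. If you want to salvage your approach, you would need an analogue of Lemma~\ref{twoprod} that controls the residuals --- but at that point you are essentially reproducing the paper's argument, and it is simpler to prove spanning directly.
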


\

 Let $\ell^\prime, \ell^{\prime\prime} \geq 1$. The products $g(\ddp,\lp) g(\ddpp, \lpp)$
with $\ddpp + \lp \geq \ddp \geq \ddpp$ will be called {\it irregular}, while the products with $\ddp > \ddpp + \lp$ will be called {\it regular}. Here we set $g(0,0) = 1$ and consider $g(d, \ell) g(0, 0)$ with $d \geq \ell \geq 1$, to be a regular product.

The proof of Theorem \ref{basis} will be based on the following

\begin{lemma} 
\label{twoprod}
Every irregular product $g(\ddp,\lp) g(\ddpp, \lpp)$
with $\ddpp + \lp \geq \ddp \geq \ddpp$, \ $\lp, \lpp \geq 1$, is a linear combination of regular products  $g(d_1, \ell_1) g(d_2, \ell_2)$, with 
$d_1 + d_2 = \ddp + \ddpp$, \ $\ell_1 + \ell_2 = \lp + \lpp$,
where either $d_1 > \ddp$ or $d_1 = \ddp$ and $\ell_1 < \lp$.
\end{lemma}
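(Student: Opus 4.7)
The plan is to extract from the elementary factorization $h(r+s,z) = h(r,z)\, h(s,z)$ the coefficient identity
\begin{equation*}
\binom{\ell'+\ell''}{\ell'}\, g(d'+d'',\, \ell'+\ell'') \;=\; \sum_{d_1 + d_2 \,=\, d' + d''}\, g(d_1, \ell')\, g(d_2, \ell''),
\end{equation*}
obtained by comparing coefficients of $r^{\ell'} s^{\ell''} z^{d'+d''}$ and with the sum taken over $d_1 \geq \ell'$, $d_2 \geq \ell''$. Solving this for $g(d', \ell')\, g(d'', \ell'')$ displays it as a scalar multiple of $g(d'+d'', \ell'+\ell'')$---viewed as the regular one-factor product $g(d'+d'', \ell'+\ell'')\, g(0,0)$, whose leading index automatically satisfies $d_1 = d'+d'' > d'$ since $d'' \geq \ell'' \geq 1$---minus a remainder sum of two-factor products $g(d_1, \ell')\, g(d_2, \ell'')$ over pairs $(d_1, d_2) \neq (d', d'')$ that must be straightened.

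I would then process the remainder inductively. For each term, commutativity of $\F$ lets me put the factor with the larger $d$-coordinate first, so that both the regularity condition $d_1 > d_2 + \ell_1$ and the dominance condition $(d_1, \ell_1) \succ (d', \ell')$---meaning $d_1 > d'$, or $d_1 = d'$ with $\ell_1 < \ell'$---can be read off directly from the canonical orientation. Regular terms satisfying $\succ (d', \ell')$ contribute directly to the desired expression, while irregular canonical forms are rewritten by invoking the lemma at a strictly smaller parameter under a well-founded order (say a refinement of $\succ$ that also tracks the gap $|d_1 - d_2|$ as a secondary invariant). A short arithmetic check shows that, for the remainder terms with $d_1 > d'$ (hence $d_1 > d_2$ and no swap needed), the leading index is $(d_1, \ell')$ and automatically satisfies $\succ (d', \ell')$; similarly, swapped terms with $d_2 > d'$, or $d_2 = d'$ and $\ell'' < \ell'$, are covered.

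The step I expect to demand most care is the ``middle'' regime of the remainder, where a term has both $d_1, d_2 \in (d'', d')$ and, after swapping into canonical orientation, yields a leading index $(d_2, \ell'')$ with $d_2 < d'$---such a pair is \emph{not} $\succ (d', \ell')$, so a single-parameter induction on $\succ$ alone cannot close the recursion. To handle this I would strengthen the inductive statement to a joint claim covering all irregular two-factor products of fixed total degree $D = d'+d''$ and length $\ell = \ell'+\ell''$ simultaneously, and combine the displayed identity above for every admissible length split $L_1 + L_2 = \ell$. The resulting square linear system in the irregular unknowns should be triangular in the refined order, forcing the problematic ``middle'' regulars (those with $d_1 \leq d'$) to appear with coefficient zero in the final straightening. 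Verifying this triangularity and the resulting cancellation is the combinatorial heart of the argument.
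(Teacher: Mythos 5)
Your only source of relations is too small to carry out the straightening, and the gap sits exactly where you flag it. Extracting the coefficient of $r^{\ell'}s^{\ell''}z^{D}$ from $h(r+s,z)=h(r,z)h(s,z)$ gives, for each bidegree $(D,\ell)$ and each length split, exactly one relation; moreover the splits $(\ell',\ell'')$ and $(\ell'',\ell')$ give the identical relation, so at bidegree $(D,\ell)$ you have only about $\ell/2$ equations, while the number of irregular two-factor products of that bidegree grows quadratically in $\ell$. So the ``square linear system'' you hope to triangularize does not exist. Concretely, take $d'=5$, $\ell'=3$, $d''=4$, $\ell''=2$, so $(D,\ell)=(9,5)$. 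The only relation of yours involving products with length split $\{3,2\}$ is
\begin{displaymath}
\binom{5}{3}\, g(9,5) = g(7,3)g(2,2) + g(6,3)g(3,2) + g(5,3)g(4,2) + g(5,2)g(4,3) + g(6,2)g(3,3),
\end{displaymath}
and it contains two distinct irregular products tied at leading degree $5$, namely $g(5,3)g(4,2)$ and $g(5,2)g(4,3)$. The lemma must be proved for both; your induction cannot reach either from the other (straightening the second would invoke this very same equation), and a single equation cannot express each of them separately in the required form --- it only controls their sum modulo $g(9,5)$, the regular terms, and $g(6,3)g(3,2)$ (which has leading degree $6>5$ and is legitimately covered by induction on the gap). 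Genuine ``middle'' terms with leading degree strictly below $d'$ also occur, e.g.\ $g(6,6)g(3,1)$ inside your relation for the target $g(7,6)g(2,1)$, and nothing in your system makes them cancel.

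What is missing is a mechanism that, for each individual target $g(d',\ell')g(d'',\ell'')$, produces a relation in which all unwanted terms are annihilated. The paper manufactures such a relation by applying $\ell-1$ first-order operators $\left(z\frac{d}{dz}+p-n-i\right)$ and $\left(r\frac{d}{dr}-j\right)$, with shifts tailored to $(d',\ell')$, to $h(r,z)$ and then multiplying by $h(-r,z)$: since only $\ell-1$ derivatives are applied, the result is a polynomial of degree $\ell-1$ in $r$, so its $z^d r^\ell$ coefficient vanishes, and the chosen scalar weights kill every term with $d''\leq d_1<d'$ as well as every irregular term with $d_1=d'$ except the target itself. That target-dependent choice of weights is precisely what breaks the ties your argument cannot break and lets the decreasing induction on the gap $d'-d''$ close. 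Without an ingredient of this kind (or some other genuinely larger supply of independent relations mixing the length splits), your plan cannot be completed.
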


\begin{proof} 
We will consider the case when $d = \ddp + \ddpp$ is odd and $\ell = \lp + \lpp$ is even, $d = 2n+1$, $\ell = 2m$. The cases of other parities are analogous.
We can write $\ddp = n + p$, \  $\ddpp = n - p + 1$, \ $2p - 1 \leq \lp \leq 2m - 1$,
\ $\lpp = 2m - \lp$, \  $1 \leq p \leq m$.

We will use a decreasing induction in $p$. As a basis of induction we may choose 
$p = m + 1$, in which case all products are regular and there is nothing to prove. Let us carry out the step of induction. We assume that the claim of the Lemma holds for irregular products $g(d_1, \ell_1) g(d_2, \ell_2)$ with $d_1 > \ddp$.

Consider the generating function
\begin{equation}
\label{geniden}
\left[ \prod_{i=1}^{2p-1} \left( z \frac{d}{dz} + p - n - i \right)
\prod_{{\scriptstyle j=2p-1}\atop{\scriptstyle j\neq \lp}}^{2m-1} \left( r \frac{d}{dr} - j \right) h(r,z) \right]
h(-r, z) . 
\end{equation}
Since the total number of derivatives is $\ell-1$, (\ref{geniden}) is in fact a polynomial in $r$ of degree
$\ell - 1$. Hence the coefficient at $z^d r^\ell$ in (\ref{geniden}) is equal to $0$. This yields
an identity
\begin{equation}
\label{algiden}
\sum_{{\scriptstyle d_1 + d_2 = d} \atop {\scriptstyle \ell_1 + \ell_2 = \ell}} (-1)^{\ell_2}
\prod_{i=1}^{2p-1} \left( d_1 + p - n - i \right)
\prod_{{\scriptstyle j=2p-1}\atop{\scriptstyle j\neq \lp}}^{2m-1} \left( \ell_1 - j \right) 
g(d_1, \ell_1) g(d_2, \ell_2) = 0.
\end{equation}
The terms in (\ref{algiden}) with $n-p+1 \leq d_1 \leq n+p-1$ vanish, thus the only terms that occur have $d_1 \geq \ddp$ or $d_2 > \ddp$. If we look at the terms in (\ref{algiden}) with $d_1 = \ddp$, all such irregular terms will vanish, except for  $g(\ddp,\lp) g(\ddpp, \lpp)$. Thus we can use (\ref{algiden}) to express $g(\ddp,\lp) g(\ddpp, \lpp)$ as a linear combination of regular products and those irregular products for which the claim of the Lemma holds by the induction assumption. All regular products in the expansion of $g(\ddp,\lp) g(\ddpp, \lpp)$ will have
$d_1 > \ddp$ or $d_1 = \ddp$ and $\ell_1 < \lp$. This completes the proof of the Lemma.
\end{proof}

\

Let us order the set of pairs $(d, \ell)$ as follows: $(d_1, \ell_1) \ggr (d_2, \ell_2)$ if either $d_1 > d_2$ or $d_1 = d_2$ and $\ell_1 < \ell_2$. Consider the set
$\Sh (d, \ell)$ of ordered products
$g(d_1, \ell_1) g(d_2, \ell_2) \ldots g(d_k, \ell_k)$ with
$(d_1, \ell_1) \gge (d_2, \ell_2) \gge \ldots \gge (d_k, \ell_k)$,
\ $\sum\limits_{i=1}^k d_i = d$, \ $\sum\limits_{i=1}^k \ell_i = \ell$, \ $\ell_i \geq 1$. Introduce a lexicographic order on $\Sh (d, \ell)$:
$$g(\ddp_1, \lp_1) g(\ddp_2, \lp_2) \ldots g(\ddp_k, \lp_k) \ggr
g(\ddpp_1, \lpp_1) g(\ddpp_2, \lpp_2) \ldots g(\ddpp_k, \lpp_k)$$
if for some $m$, $(\ddp_i, \lp_i) = (\ddpp_i, \lpp_i)$ for $i = 1, \ldots, m-1,$
and $(\ddp_m, \lp_m) \ggr (\ddpp_m, \lpp_m)$.

 Now we can prove Theorem \ref{basis}. The set $\Sh (d, \ell)$ clearly spans the space 
$\F (d, \ell)$ since $g(p, 1) = x_p$ and $\F (d, \ell)$ is spanned by monomials. It follows from Lemma \ref{twoprod} that every product from $\Sh (d, \ell)$ which is not in $S(d, \ell)$ may be expressed as a linear combination of the elements of $\Sh (d, \ell)$ which are greater in the lexicographic order. By induction with respect to this ordering we conclude that
\begin{displaymath}
\F (d, \ell) = \Span \Sh (d, \ell) = \Span S(d,\ell)  .
\end{displaymath}
However, elements of $S(d, \ell)$ are parametrized by Young diagrams with $d$ boxes and $\ell$ rows. Hence $|S(d, \ell)| = \dimm \F(d, \ell)$ and $S(d, \ell)$ is a basis of $\F(d, \ell)$. This completes the proof of Theorem \ref{basis}.

\

Let us compute the eigenvalues of the differential operator $T$.

\

\begin{theorem} Eigenvalues of the operator
\begin{displaymath}
 T = \frac{1}{2} \sum\limits_{ {\scriptstyle a+b = p+ q} \atop {\scriptstyle a,b,p,q \geq 1}}
x_a x_b \frac{\dd}{\dd x_p} \frac{\dd}{\dd x_q} 
\end{displaymath}
on $\F (d, \ell)$, $\ell \geq 1$, are parametrized by sequences
$(d_1, \ell_1), (d_2, \ell_2), \ldots, (d_k, \ell_k)$ with 
\begin{displaymath}
d_i > d_{i+1} + \ell_i, \ i = 1, \ldots, k-1, \ \ d_k \geq \ell_k,
\ \ \sum\limits_{i=1}^k d_i = d, \ \ \sum\limits_{i=1}^k \ell_i = \ell, \ \ 
\ell_1, \ldots, \ell_k \geq 1.
\end{displaymath}
The corresponding eigenvalue is
\begin{displaymath}
\lambda = \frac{1}{2} \sum_{i=1}^k (\ell_i -1) (2 d_i - \ell_i) .
\end{displaymath}
\label{main}
\end{theorem}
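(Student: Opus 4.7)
Proof plan. The strategy is to show that, in the basis $S(d,\ell)$ of Theorem \ref{basis} ordered lexicographically from largest to smallest, the operator $T$ is upper triangular with the diagonal entry on $g(d_1,\ell_1)\cdots g(d_k,\ell_k)$ equal to $\frac{1}{2}\sum_i(\ell_i-1)(2d_i-\ell_i)$; this immediately yields the claimed spectrum. The first step is to verify that each single factor $g(d,\ell)$ is already an eigenvector. Applying $T$ to the generating series $h(r,z)=\exp(r\psi(z))$ with $\psi(z)=\sum_j x_j z^j$, and using $\frac{\dd h}{\dd x_p}=rz^p h$, one gets $Th=\frac{r^2}{2}h(2z\psi\psi'-\psi^2)$. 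Rewriting via $\psi h=\frac{\dd h}{\dd r}$ and $\psi'h=r^{-1}\frac{\dd h}{\dd z}$ yields the compact form
\begin{displaymath}
Th \;=\; rz\,\frac{\dd^2 h}{\dd z\,\dd r}\;-\;z\,\frac{\dd h}{\dd z}\;-\;\frac{r^2}{2}\,\frac{\dd^2 h}{\dd r^2},
\end{displaymath}
from which extracting the coefficient of $r^\ell z^d$ gives $T g(d,\ell)=\frac{1}{2}(\ell-1)(2d-\ell)\,g(d,\ell)$.

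Since $T$ is a purely second-order differential operator with no lower-order part, the Leibniz rule yields
\begin{displaymath}
T\Bigl(\prod_i g(d_i,\ell_i)\Bigr) = \Bigl(\sum_i \tfrac{1}{2}(\ell_i-1)(2d_i-\ell_i)\Bigr)\prod_i g(d_i,\ell_i) + \sum_{i<j} B\bigl(g(d_i,\ell_i),g(d_j,\ell_j)\bigr)\prod_{m\neq i,j} g(d_m,\ell_m),
\end{displaymath}
where $B(f_1,f_2):=\sum_{a+b=p+q}x_a x_b\,\frac{\dd f_1}{\dd x_p}\,\frac{\dd f_2}{\dd x_q}$ (with $a,b,p,q\ge 1$). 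The diagonal contribution already matches the predicted eigenvalue, so the theorem reduces to showing that, for every regular pair $(\ddp,\lp),(\ddpp,\lpp)$, the element $B(g(\ddp,\lp),g(\ddpp,\lpp))$ expanded in the basis $S(\ddp+\ddpp,\lp+\lpp)$ is supported strictly on basis elements $\ggr g(\ddp,\lp)g(\ddpp,\lpp)$ in lex order.

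This is the heart of the proof: a ``Leibniz version'' of Lemma \ref{twoprod}. A direct computation with the generating functions gives
\begin{displaymath}
B\bigl(g(\ddp,\lp),g(\ddpp,\lpp)\bigr) = \bigl[z_1^{\ddp}z_2^{\ddpp}\bigr]\,\frac{z_1\psi(z_2)^2-z_2\psi(z_1)^2}{z_2-z_1}\cdot\frac{\psi(z_1)^{\lp-1}}{(\lp-1)!}\cdot\frac{\psi(z_2)^{\lpp-1}}{(\lpp-1)!}.
\end{displaymath}
The plan is to obtain the required triangularity by a generating-function identity modelled on equation (\ref{geniden}): apply an appropriate polynomial in the Euler operators $r_i\frac{\dd}{\dd r_i}$ and $z_i\frac{\dd}{\dd z_i}$ to a suitable product of factors $h(\pm r_i,z_i)$ so that its coefficient at $r_1^{\lp}r_2^{\lpp}z_1^{\ddp}z_2^{\ddpp}$ expresses $B(g(\ddp,\lp),g(\ddpp,\lpp))$, for a regular pair, as a combination of two-factor products $g(d_1,\ell_1)g(d_2,\ell_2)$ with $(d_1,\ell_1)\ggr(\ddp,\lp)$; Lemma \ref{twoprod} then reduces any remaining irregular two-factor products to regular ones while preserving the lex inequality. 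The main obstacle is constructing the correct differential polynomial: small-case computations confirm that the cancellation among the many two-factor summands of $B$ (some already in $S$, others needing Lemma \ref{twoprod} to be reduced) is precisely such that no strictly smaller basis element survives, but finding the algebraic identity that realizes this cancellation in general is the technical crux. Once it is in place, $T$ is upper triangular in $S(d,\ell)$ with the claimed diagonal, and the spectrum on $\F(d,\ell)$ follows.
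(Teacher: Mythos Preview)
Your overall architecture is exactly the paper's: show $T$ is upper triangular in the basis $S(d,\ell)$ with respect to the lexicographic order $\ggr$, with diagonal entries $\tfrac12\sum_i(\ell_i-1)(2d_i-\ell_i)$. Your single-factor computation $Tg(d,\ell)=\tfrac12(\ell-1)(2d-\ell)\,g(d,\ell)$ and the Leibniz decomposition into diagonal plus cross terms $B(g_i,g_j)$ are correct and match the paper.

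The gap is in the cross-term step. You state that establishing the strict lex-increase of $B\bigl(g(\ddp,\lp),g(\ddpp,\lpp)\bigr)$ is ``the technical crux'' and propose hunting for a new differential identity of the type (\ref{geniden}), but you do not carry it out. In fact no new identity is needed: the answer is already sitting in your own formula for $B$. Expand the rational factor as a geometric series,
\[
\frac{z_1\psi(z_2)^2-z_2\psi(z_1)^2}{z_2-z_1}
=\Bigl(1-\tfrac{z_2}{z_1}\Bigr)^{-1}\psi(z_2)^2
\;-\;\tfrac{z_2}{z_1}\Bigl(1-\tfrac{z_2}{z_1}\Bigr)^{-1}\psi(z_1)^2,
\]
and extract the coefficient of $z_1^{\ddp}z_2^{\ddpp}$. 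This is precisely what the paper does (working with the multi-variable exponential $E=\exp\sum_i X_i$ rather than with $B$ pairwise, but the computation is the same). One obtains explicitly
\[
B\bigl(g(\ddp,\lp),g(\ddpp,\lpp)\bigr)
=-\sum_{p\ge 0}\lpp(\lpp{+}1)\,g(\ddp{+}p,\lp{-}1)\,g(\ddpp{-}p,\lpp{+}1)
+\sum_{p\ge 1}\lp(\lp{+}1)\,g(\ddp{+}p,\lp{+}1)\,g(\ddpp{-}p,\lpp{-}1).
\]
In every nonzero term the first factor has parameters $(\ddp{+}p,\lp{\pm}1)$ with either $p\ge 1$, or $p=0$ and $\lp{-}1<\lp$; in each case $(\ddp{+}p,\lp{\pm}1)\ggr(\ddp,\lp)$. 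Multiplying by the unchanged factors $\prod_{m\neq i,j}g(d_m,\ell_m)$ and reordering therefore yields an element of $\Sh(d,\ell)$ that is strictly $\ggr g(d_1,\ell_1)\cdots g(d_k,\ell_k)$. Whenever an irregular two-factor product appears, Lemma~\ref{twoprod} rewrites it as a combination of regular products whose leading pair is strictly $\ggr$ again, so the lex inequality persists all the way down to $S(d,\ell)$. That finishes the triangularity, and with it the theorem.

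So your plan is sound and matches the paper; the missing step is not a new identity of type~(\ref{geniden}) but simply the geometric-series expansion of the kernel $(z_2-z_1)^{-1}$ you already wrote down.
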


\begin{proof}
We are going to show that the matrix of the operator $T$ is upper-triangular in the basis $S(d, \ell)$ ordered by $\ggr$. Then the spectrum of $T$ is given by the diagonal of this matrix.

Consider the generating functions
\begin{displaymath}
X_i = r_i \sum_{j=1}^\infty x_j z_i^j 
\end{displaymath}
and
\begin{displaymath}
E = \exp \left( \sum_{i=1}^\infty X_i \right) =
\exp \left(  \sum_{i=1}^\infty r_i \sum_{j=1}^\infty x_j z_i^j \right) .
\end{displaymath}
The product $g(d_1, \ell_1) \ldots g(d_k, \ell_k)$ is the coefficient at
$z_1^{d_1} \ldots z_k^{d_k} r_1^{\ell_1} \ldots r_k^{\ell_k}$ in $E$. Let us apply operator $T$ to the generating function $E$:
\begin{gather*}
TE = \frac{1}{2} \sum_{n=2}^\infty \left( \sum_{a+b = n} x_a x_b \right)
\sum_{p+q = n} \frac{\dd}{\dd x_p}\frac{\dd}{\dd x_q} E 
\end{gather*}
\begin{gather*}
=   \frac{1}{2} \sum_{n=2}^\infty \left( \sum_{a+b = n} x_a x_b \right)
\sum_{p+q = n} \left( \sum_{i=1}^\infty r_i z_i^p \right) 
\left( \sum_{j=1}^\infty r_j z_j^q \right) E 
\end{gather*}
\begin{gather*}
=   \frac{1}{2}   \sum_{i=1}^\infty r_i^2   
\sum_{n=2}^\infty \left( \sum_{a+b = n} x_a x_b \right)
\sum_{p+q = n} z_i^{p+q} E \\
+  \sum_{i<j} r_i r_j \sum_{n=2}^\infty \left( \sum_{a+b = n} x_a x_b \right)
\sum_{p+q = n} z_i^p z_j^q E 
\end{gather*}
\begin{gather*}
=   \frac{1}{2}   \sum_{i=1}^\infty r_i^2   
\sum_{n=2}^\infty \left( \sum_{a+b = n} x_a x_b \right)
(n-1) z_i^n E \\
+  \sum_{i<j} r_i r_j \sum_{n=2}^\infty \left( \sum_{a+b = n} x_a x_b \right)
\left( 1 - \frac{z_j}{z_i} \right)^{-1} \left( z_j z_i^{n-1} - z_j^n \right) E 
\end{gather*}
\begin{gather*}
=   \frac{1}{2}   \sum_{i=1}^\infty 
\left[ \left( z_i \frac{d}{dz_i} - 1 \right) r_i^2 \sum_{a, b \geq 1} x_a x_b z_i^{a + b} 
\right] E \\
 - \sum_{i<j} \left( 1 - \frac{z_j}{z_i} \right)^{-1} \frac{r_i}{r_j} r_j^2   
\sum_{a,b \geq 1} \left( x_a x_b z_j^{a+b} \right) E
 + \sum_{i<j} \frac{z_j}{z_i} \left( 1 - \frac{z_j}{z_i} \right)^{-1} \frac{r_j}{r_i} r_i^2   
\sum_{a,b \geq 1} \left( x_a x_b z_i^{a+b} \right) E 
\end{gather*}
\begin{gather*}
=   \frac{1}{2}   \sum_{i=1}^\infty 
\left[ \left( z_i \frac{d}{d z_i} -1 \right) X_i^2 \right] E
 - \sum_{i<j} \left( 1 - \frac{z_j}{z_i} \right)^{-1} \frac{r_i}{r_j} X_j^2 E
 + \sum_{i<j} \frac{z_j}{z_i} \left( 1 - \frac{z_j}{z_i} \right)^{-1} \frac{r_j}{r_i} 
X_i^2  E 
\end{gather*}
\begin{gather*}
=  \sum_{i=1}^\infty 
\left[ \left( z_i \frac{d}{d z_i} - \frac{1}{2} \right) X_i \right] X_i E
 - \sum_{i<j} \left( 1 - \frac{z_j}{z_i} \right)^{-1} \frac{r_i}{r_j} X_j^2 E
 + \sum_{i<j} \frac{z_j}{z_i} \left( 1 - \frac{z_j}{z_i} \right)^{-1} \frac{r_j}{r_i} 
X_i^2  E 
\end{gather*}
\begin{gather*}
=  \sum_{i=1}^\infty 
\left[ \left( z_i \frac{d}{d z_i} - \frac{1}{2} \right) X_i \right] r_i \frac{d}{dr_i} E \\
 - \sum_{i<j} \left( 1 - \frac{z_j}{z_i} \right)^{-1} \frac{r_i}{r_j} X_j
r_j \frac{d}{dr_j} E
 + \sum_{i<j} \frac{z_j}{z_i} \left( 1 - \frac{z_j}{z_i} \right)^{-1} \frac{r_j}{r_i} 
X_i r_i \frac{d}{dr_i}  E 
\end{gather*}
\begin{gather*}
=  \sum_{i=1}^\infty \left( r_i \frac{d}{dr_i} - 1\right)
\left[ z_i \frac{d}{d z_i} X_i - \frac{1}{2} X_i  \right]  E \\
 - \sum_{i<j} \left( 1 - \frac{z_j}{z_i} \right)^{-1} r_j \frac{d}{dr_j}
\frac{r_i}{r_j} X_j E
 + \sum_{i<j} \frac{z_j}{z_i} \left( 1 - \frac{z_j}{z_i} \right)^{-1}  r_i \frac{d}{dr_i}
\frac{r_j}{r_i} X_i  E 
\end{gather*}
\begin{gather*}
=  \sum_{i=1}^\infty \left( r_i \frac{d}{dr_i} - 1\right)
\left( z_i \frac{d}{d z_i}  - \frac{1}{2} r_i \frac{d}{dr_i}  \right)  E \\
 - \sum_{i<j} \left( 1 - \frac{z_j}{z_i} \right)^{-1} r_j \frac{d}{dr_j}
\frac{r_i}{r_j}  r_j \frac{d}{dr_j} E
 + \sum_{i<j} \frac{z_j}{z_i} \left( 1 - \frac{z_j}{z_i} \right)^{-1}  r_i \frac{d}{dr_i}
\frac{r_j}{r_i}  r_i \frac{d}{dr_i}  E .
\end{gather*}
To get the formula for the action of $T$ on the elements of $S(d, \ell)$, we extract 
the coefficient at  $z_1^{d_1} \ldots z_k^{d_k} r_1^{\ell_1} \ldots r_k^{\ell_k}$ in $TE$:
\begin{multline*}
T g(d_1, \ell_1) g(d_2, \ell_2) \ldots g(d_k, \ell_k) \\
= \sum_{i=1}^k (\ell_i -1) \left( d_i - \frac{\ell_i}{2} \right)  
g(d_1, \ell_1) g(d_2, \ell_2) \ldots g(d_k, \ell_k) \\
- \sum_{i<j} \sum_{p=0}^\infty \ell_j (\ell_j + 1) 
g(d_1, \ell_1) \ldots g(d_i+p, \ell_i - 1) \ldots g(d_j - p, \ell_j + 1)  \ldots g(d_k, \ell_k) \\
+ \sum_{i<j} \sum_{p=1}^\infty \ell_i (\ell_i + 1) 
g(d_1, \ell_1) \ldots g(d_i+p, \ell_i + 1) \ldots g(d_j - p, \ell_j - 1)  \ldots g(d_k, \ell_k) .
\end{multline*}

The first part in the above expression yields the diagonal part of $T$ with the eigenvalue
$\lambda = \frac{1}{2} \sum\limits_{i=1}^k (\ell_i -1) (2 d_i - \ell_i)$, while the last two sums when expanded in the basis $S(d, \ell)$ applying Lemma \ref{twoprod} whenever necessary,
only contain terms that are strictly greater than  
\break
$g(d_1, \ell_1) g(d_2, \ell_2) \ldots g(d_k, \ell_k)$ with respect to the lexicographic order
$\ggr$. This completes the proof of the Theorem.
\end{proof}

\

It follows from the proof of Theorem \ref{main} that $g(d, \ell)$ is the eigenfunction for the operator $T$ with the eigenvalue $\lambda = \frac{1}{2} (\ell -1) (2 d - \ell)$, which is the dominant eigenvalue on $\F (d, \ell)$.

 Let us point out that $0$ is an eigenvalue of $T$ on $\F (d, \ell)$ if and only if $d \geq \ell^2$. 

 We can obtain the orthogonal basis of eigenfunctions for $T$ in $\F(d,\ell)$ from the ordered basis $S(d, \ell)$ using the Gram-Schmidt procedure.

\

{\bf Acknowledgements.} I thank Nantel Bergeron, Sergey Fomin and Emmanuel Lorin for  their helpful comments.
Support from the Natural Sciences and Engineering Research Council of Canada is gratefully acknowledged.

\

%\noindent
%School of Mathematics and Statistics
%
%\noindent
%Carleton University
%
%\noindent
%1125 Colonel By Drive
%
%\noindent
%Ottawa, Ontario, K1S 5B6
%
%\noindent
%Canada
%
%\noindent
%e-mail: billig@math.carleton.ca

\end{document}